\numberwithin{equation}{section}
\newtheorem{theorem}{Theorem}[section]
\newtheorem{lemma}[theorem]{Lemma}
\newtheorem{proposition}[theorem]{Proposition}
\newtheorem{definition}[theorem]{Definition}
\newtheorem{remark}[theorem]{Remark}
\renewcommand{\Pr}{ \mathrm P}
\newcommand{ \rel}{ t_{\mathrm{rel}} }
\newcommand{ \mix}{ t_{\mathrm{mix}} }
\newcommand{ \hit}{ \mathrm{hit} }
\newcommand{ \TV}{ \mathrm{TV} }
\newcommand{\eps}{\epsilon}
\newcommand{\la}{\lambda}
\DeclareMathSymbol{\leqslant}{\mathalpha}{AMSa}{"36} % nicer `smaller or equal'
\DeclareMathSymbol{\geqslant}{\mathalpha}{AMSa}{"3E} % nicer `larger or equal'
\DeclareMathSymbol{\eset}{\mathalpha}{AMSb}{"3F}     % nicer `emptyset'
\renewcommand{\epsilon}{\varepsilon}
\newcommand{\N}{\mathbb N}
\newcommand{\R}{\mathbb R}
\newcommand{\Z}{\mathbb Z}
\begin{document}

% Title Page
%\begin{titlepage}
\title{Cutoff for Ramanujan graphs via degree inflation}
\author{Jonathan Hermon
\thanks{
Faculty of mathematics and computer science, Weizmann Institute of Science, Rehovot, Israel. E-mail: {\tt jonathah@weizmann.ac.il}.}}
\date{}
%\date{\today}
\maketitle

\begin{abstract}
Recently Lubetzky and Peres showed that simple random walks on a sequence of $d$-regular Ramanujan graphs $G_n=(V_n,E_n)$ of increasing sizes exhibit cutoff in total variation around the diameter lower bound $\frac{d}{d-2}\log_{d-1}|V_n| $. We provide a different argument  under the assumption that for some $r(n) \gg 1$ the maximal number of simple cycles in a ball of radius $r(n)$ in $G_n$ is uniformly bounded in $n$.
\end{abstract}

%Keywords
\paragraph*{\bf Keywords:}
{\small Cutoff, Ramanujan graphs, degree inflation.
}
%\end{titlepage}
%\newpage

\section{Introduction} 

Generically, we denote the stationary distribution of an ergodic Markov chain $(X_t)_{t \ge 0}$ by $\pi$,  its state space by $\Omega $ and its transition matrix by $P$. We denote by $\Pr_{x}^t$ (resp.~$\Pr_{x}$) the distribution of $X_t$ (resp.~$(X_t)_{t \ge 0 }$), given that the initial state is $x$. The total variation distance of two distributions on $\Omega $ is $\|\mu - \nu \|_{\TV}=\frac{1}{2} \sum_y | \mu(y)-\nu(y)|$. The total variation $\epsilon$-mixing time is $\mix(\epsilon):= \inf \{t: \max_x \|\Pr_x^t - \pi \|_{\TV} \le \epsilon \} $.    
Next, consider a sequence of chains, $((\Omega_n,P_n,\pi_n))_{n \in \N}$,
each with its 
mixing  time $t_{\mathrm{mix}}^{(n)}(\cdot)$. We say that the sequence exhibits a cutoff if the
following
sharp transition in its convergence to stationarity occurs:
\begin{equation}\label{def:cutoff}
\forall \eps \in (0,1/2], \quad  \lim_{n \to \infty}t_{\mathrm{mix}}^{(n)}(\epsilon)/t_{\mathrm{mix}}^{(n)}(1-\epsilon)=1.
\end{equation}
A family of $d$-regular graphs $G_n$ with $d \ge 3$ is called an \emph{expander family}, if the second largest eigenvalues of the corresponding adjacency matrices are uniformly bounded away from $d$. Lubotzky, Phillips, and Sarnak \cite{Lub}
defined  a connected finite $d$-regular graph $G$ with $d \ge 3$ to be \textbf{\emph{Ramanujan}} if the eigenvalues of the transition matrix of simple random walk (SRW) on $G$ all lie in $\{\pm 1\} \cup [-\rho_d,\rho_d] $, where $\rho_d:=\frac{2\sqrt{d-1}}{d} $ is the spectral radius of SRW on the infinite $d$-regular tree $\mathbb{T}_d $. Lubotzky, Phillips, and Sarnak \cite{Lub}, Margulis \cite{margulis} and Morgenstern \cite{morg} constructed $d$-regular Ramanujan graphs for all $d$ of the form $d=p^{m}+1$, where $p$ is a prime number. Recently, Marcus,  Spielman and Srivastava \cite{existence} proved the existence of bipartite $d$-regular Ramanujan graphs for all $d \ge 3$.  In light of the Alon-Boppana bound \cite{AB}, Ramanujan graphs are ``optimal expanders" as they have asymptotically the largest spectral-gap.

Let $G_n=(V_n,E_n)$ be a sequence of finite connected $d_n$-regular graphs. Let $P_n$ be the transition matrix of SRW on $G_n$. Denote the eigenvalues of $P_n$ by $1=\la_1(n)>\la_2(n) \ge \cdots \ge \la_{|V_n|}(n) \ge -1$. We say that the sequence is asymptotically Ramanujan if $|V_n| \to \infty $ and \[\max  \{|\la_i(n)|:| \la_i(n) |\neq 1 \} \le \rho_{d_n}^{1-o(1)} .\] We say that the sequence is asymptotically one-sided Ramanujan if $|V_n| \to \infty $, $\la_2(n) \le \rho_{d_n}^{1-o(1)} $ and $\liminf_{n \to \infty}\min \{\la_i(n): \la_i(n) \neq -1 \}>-1$.  Friedman \cite{friedman2} showed that a sequence of $d$-regular random graphs of increasing sizes is w.h.p.~asymptotically Ramanujan.

\begin{remark}
\label{rem:nonstandard}
Our definition of asymptotically Ramanujan graphs is not the standard one. The more standard definition is that  $\max  \{|\la_i(n)|:| \la_i(n) |\neq 1 \} \le \rho_{d_n}+o(1)$.
\end{remark}

It is elementary to show that for every $n$-vertex $d$-regular graph, the $1-\epsilon$ total variation mixing time for the SRW is at least $t_{d,\eps,n}:=\frac{d}{d-2}\log_{d-1}n-C\sqrt{n |\log \epsilon|/d}$, for some constant $C>0$.\footnote{This can be derived from the fact that $C$ can be chosen so that the probability that the probability that the distance of the walk at time $t_{d,\eps,n}$   from its starting point is at least $\lfloor \log_{d-1}(\frac{1}{4} \eps n)\rfloor $ with probability at most $\frac{\eps}{2}$ (together with the fact that a ball of radius $\lfloor \log_{d-1}(\frac{1}{4} \eps n)\rfloor $ contains at most $\frac{1}{2} \eps n$ vertices).} The following precise formulation of this fact is due to Lubeztky and Peres
\cite{ram}. 
\begin{lemma}[Trivial diameter lower bound - c.f.~\cite{ram} (2.2)-(2.3) pg.~9]
\label{lem: lower1}
Let $G=(V,E)$ be an $n$-vertex  $d$-regular graph with $d \ge 3$.  Let $c_d:=\frac{2\sqrt{d(d-1)}}{(d-2)^{3/2}} $ and $\Phi^{-1}$ be the inverse function of the CDF of the standard Normal distribution.  Then SRW on $G$ satisfies  
\[\forall \epsilon \in (0,1), \quad \mix(1- \epsilon -o(1) ) \ge \frac{d}{d-2}\log_{d-1}n+c_{d} \Phi^{-1}(\epsilon)\sqrt{\log_{d-1}n} . \]
\end{lemma}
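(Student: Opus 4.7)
The plan is to execute the sketch given in the footnote. Fix a starting vertex $x\in V$ and write $B_r:=B(x,r)$ for the graph ball of radius $r$. Since $\|\Pr_x^t-\pi\|_\TV\geq \Pr_x[X_t\in B_r]-\pi(B_r)$ and any ball in a $d$-regular graph satisfies
\[|B_r|\leq 1+d\sum_{j=0}^{r-1}(d-1)^j\leq \tfrac{d}{d-2}(d-1)^r,\]
taking $r_n:=\lfloor\log_{d-1}n-\log_{d-1}\omega_n\rfloor$ for an arbitrary $\omega_n\to\infty$ makes $\pi(B_{r_n})=o(1)$. The task reduces to exhibiting $t^*$ of the claimed form at which $\Pr_x[D_t\leq r_n]\geq 1-\epsilon+o(1)$, where $D_t:=\mathrm{dist}_G(X_0,X_t)$.

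I would then control $D_t$ by pushing the walk to the universal cover: SRW on $G$ lifts canonically to SRW on the $d$-regular tree $\mathbb{T}_d$, and since the covering map is distance non-increasing, $D_t\leq \tilde D_t$ pointwise, where $\tilde D_t$ is the corresponding tree distance of the lifted walk from its start. The process $(\tilde D_t)$ is the birth--death chain on $\{0,1,2,\ldots\}$ that from each $k\geq 1$ moves to $k+1$ with probability $(d-1)/d$ and to $k-1$ with probability $1/d$, reflecting at $0$. Its per-step mean is $(d-2)/d$ and its per-step variance is $4(d-1)/d^2=\rho_d^2$. Since the drift is strictly positive, $0$ is visited only finitely often almost surely, so the Berry--Esseen CLT applied to the (almost i.i.d.) increments yields, for fixed $d$ as $t\to\infty$,
\[\Pr[\tilde D_t\leq r]=\Phi\!\left(\frac{r-\tfrac{d-2}{d}t}{\rho_d\sqrt{t}}\right)+o(1).\]

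I would finish by choosing $t^*$ so that the right-hand side above equals $1-\epsilon$ at $r=r_n$, which gives $\tfrac{d-2}{d}t^*-\rho_d\Phi^{-1}(\epsilon)\sqrt{t^*}=r_n+o(\sqrt{t^*})$. Substituting the leading order $t^*\sim\tfrac{d}{d-2}\log_{d-1}n$ into the $\sqrt{t^*}$ term yields
\[t^*=\frac{d}{d-2}\log_{d-1}n+\tfrac{d}{d-2}\cdot\rho_d\cdot\sqrt{\tfrac{d}{d-2}}\,\Phi^{-1}(\epsilon)\sqrt{\log_{d-1}n}+o(\sqrt{\log n}),\]
and the coefficient simplifies, via $\rho_d=2\sqrt{d-1}/d$, to $c_d=2\sqrt{d(d-1)}/(d-2)^{3/2}$. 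The conceptual core is the universal-cover domination $D_t\leq\tilde D_t$, which reduces the problem to one-dimensional Gaussian asymptotics; the volume estimate and the CLT are routine. The only mild obstacle is careful bookkeeping so that the combined losses from $\pi(B_{r_n})=o(1)$, the Berry--Esseen remainder, and the asymptotic inversion all fit inside the single $o(1)$ permitted in $\mix(1-\epsilon-o(1))$.
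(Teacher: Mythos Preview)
Your argument is correct and is precisely the approach the paper points to: it does not prove the lemma but cites \cite{ram} and sketches in a footnote exactly the two ingredients you use --- the ball volume bound $|B_r|\le\tfrac{d}{d-2}(d-1)^r$ and the Gaussian concentration of the distance-from-start, obtained via the covering tree $\mathbb{T}_d$. Your derivation of the constant $c_d$ from the drift $(d-2)/d$ and variance $\rho_d^2=4(d-1)/d^2$ of the radial birth--death chain is correct.

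One small cosmetic point: rather than solving for $t^*$ and carrying an $o(\sqrt{\log n})$ remainder in the time, it is slightly cleaner to plug the stated $t=\frac{d}{d-2}\log_{d-1}n+c_d\Phi^{-1}(\epsilon)\sqrt{\log_{d-1}n}$ directly into your CLT estimate. One then gets $(r_n-\tfrac{d-2}{d}t)/(\rho_d\sqrt{t})=-\Phi^{-1}(\epsilon)-o(1)$, hence $\Pr[\tilde D_t\le r_n]=1-\epsilon-o(1)$ by continuity of $\Phi$; this places the slack where the lemma puts it, in the level $1-\epsilon-o(1)$ rather than in the time. Either way the conclusion is the same.
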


Recently, Lubetzky and Peres \cite{ram} showed that simple random walks on a sequence of non-bipartite  $d_n$-regular Ramanujan graphs  $G_n=(V_n,E_n)$  of increasing sizes exhibit cutoff  around the diameter lower bound $\frac{d_{n}}{d_{n}-2}\log_{d_{n}-1}|V_n| $. In this work we present an alternative argument and prove the same result under the following assumption:

\medskip

\textbf{Assumption 1}: There exists a diverging sequence $r_{n}$ such that the maximal number of simple cycles in a ball of radius $r_n$ in $G_n$ is uniformly bounded in $n$.
\begin{theorem}
\label{thm:main}
Let $G_n=(V_n,E_n)$ be a sequence of non-bipartite, finite, connected, $d_n$-regular asymptotically one-sided Ramanujan graphs.
\begin{itemize}
\item[(i)] If $d_n=d$ for all $n$ and Assumption 1 holds then the corresponding sequence of simple random walks exhibits cutoff around time $\frac{d}{d-2}\log_{d-1}|V_n| $. 
\item[(ii)] If $d_n $ diverges and $\log d_n =o(\log_{d_n}|V_n|) $ then the corresponding sequence of simple random walks exhibits cutoff around time $\log_{d_{n}}|V_n| $.
\end{itemize}
\end{theorem}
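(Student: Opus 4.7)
The lower bounds in both parts follow from Lemma~\ref{lem: lower1} applied to each $G_n$, noting $\tfrac{d_n}{d_n-2}\log_{d_n-1}|V_n| = (1+o(1))\log_{d_n}|V_n|$ when $d_n\to\infty$. The task is the matching upper bound.

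Part (ii) is the easier case: the $L^2$ Poincar\'e bound alone suffices, and Assumption~1 is not needed. Using $\lambda_2(n)\le\rho_{d_n}^{1-o(1)}$ together with $-\log\rho_{d_n} = (1+o(1))\tfrac{1}{2}\log d_n$ as $d_n\to\infty$, one has
\[
\|P_n^t(x,\cdot)-\pi_n\|_{\TV}^2 \le \tfrac{1}{4}\bigl(\max_{i\ge 2}|\lambda_i(n)|\bigr)^{2t}(|V_n|-1) \le \tfrac{1}{4}\rho_{d_n}^{2t(1-o(1))}|V_n|,
\]
which is $o(1)$ at $t=(1+o(1))\log_{d_n}|V_n|$. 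The non-bipartite plus one-sided Ramanujan hypotheses eliminate the near-$(-1)$ eigenvalue issue, and the growth condition $\log d_n=o(\log_{d_n}|V_n|)$ absorbs the $o(1)$ in the exponent.

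Part (i) is harder: the same $L^2$ bound returns only $\log|V_n|/(-2\log\rho_d)$, strictly larger than the diameter bound $\tfrac{d}{d-2}\log_{d-1}|V_n|$ when $d$ is small, so a finer argument is needed. The degree-inflation strategy is to pick a slowly diverging scale $\ell_n=o(r_n)$ so that by Assumption~1 every ball of radius $\ell_n$ in $G_n$ contains $O(1)$ simple cycles and is close to the radius-$\ell_n$ ball of $\mathbb{T}_d$. Kesten's local CLT on $\mathbb{T}_d$, together with a cycle correction uniformly controlled by Assumption~1, yields the tree-return bound $p_{2\ell_n}^{(n)}(x,x)\le C\rho_d^{2\ell_n}/\ell_n^{3/2}$ uniformly in $x$, and a refined tree estimate describes the shape of the $\ell_n$-step kernel $Q_n:=P_n^{\ell_n}$: it concentrates on the ``typical sphere'' of radius $\ell_n(d-2)/d$, a set of size $D_n:=(d-1)^{\ell_n(d-2)/d}$. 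This is the sense of \emph{inflated degree}, and the missing factor $\tfrac{d}{d-2}$ in the diameter bound arises precisely as $\ell_n/\log_{d-1}D_n$.

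The remainder of the proof combines this inflated picture of $Q_n$ with the spectral decay $\rho_d^{1-o(1)}$ per $P_n$-step to derive an $L^2$ bound, and then pass via Cauchy--Schwarz to TV, that is sharp at the diameter scale $t=(1+o(1))\tfrac{d}{d-2}\log_{d-1}|V_n|$. The main obstacle is that the naive $L^2$ Poincar\'e bound on $Q_n$ still gives only $\log|V_n|/(-2\log\rho_d)$; the degree-inflation refinement must additionally exploit the approximate uniformity of $Q_n(x,\cdot)$ on the typical sphere --- a local-CLT statement transferred from $\mathbb{T}_d$ --- so as to replace the initial $L^2$-norm factor $\sqrt{|V_n|}$ with the sharper $\sqrt{|V_n|/D_n}$. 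Assumption~1 is used quantitatively, not just qualitatively, to control all the cycle corrections needed to lift these tree-like heuristics into rigorous estimates on $G_n$.
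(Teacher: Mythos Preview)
Your Part (ii) argument has a genuine gap. The $L^2$ Poincar\'e bound you invoke contracts by $\lambda:=\max_{i\ge 2}|\lambda_i(n)|$, not by $\lambda_2(n)$. One-sided asymptotically Ramanujan only guarantees $\lambda_2(n)\le\rho_{d_n}^{1-o(1)}$ together with $\liminf_n\min\{\lambda_i(n):\lambda_i(n)\neq -1\}>-1$; the negative eigenvalues are merely bounded away from $-1$, not bounded in modulus by $\rho_{d_n}^{1-o(1)}$. Your displayed inequality $\max_{i\ge 2}|\lambda_i(n)|\le\rho_{d_n}^{1-o(1)}$ is therefore unjustified, and for large $d_n$ it can fail badly (e.g.\ $\lambda_{\min}(n)=-\tfrac12$ while $\rho_{d_n}\to 0$). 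The paper avoids this by replacing the Poincar\'e inequality with its hitting-time analogue: one bounds $\Pr_a[T_{A^c}>t]$ via $\lambda(A)\le\lambda_2+\pi(A)$, which involves only $\lambda_2$, and then passes to $\mix$ via the relation $\mix(\epsilon+\alpha)\le\hit_{1-\alpha}(\epsilon)+C\rel\log(1/\alpha)$. The negative spectrum enters only through the bounded quantity $\rel$, multiplied by $\log(1/\alpha)=O(\log d_n)$, which is absorbed by the hypothesis $\log d_n=o(\log_{d_n}|V_n|)$.

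Your Part (i) is a strategy sketch rather than a proof, and the sketch does not match what the paper actually does. You propose to bound the $L^2$ norm of $P^{\ell_n}(x,\cdot)$ by $\sqrt{|V_n|/D_n}$ via a tree local-CLT and then contract by $\rho_d$ per subsequent step. Even granting the first step, the second inherits the same negative-eigenvalue problem as above; and the resulting bound, if you work it out, does not iterate to the sharp constant $\tfrac{d}{d-2}$ (one application of ``reduce then contract by $\rho_d$'' still yields leading term $\log|V_n|/(-2\log\rho_d)$). The paper's mechanism is different: it introduces the auxiliary graph $G(k)$ on the same vertex set with edges between pairs at distance exactly $k$, and the chain $\mathbf{Y}$ that records the SRW at successive distance-$k$ crossings. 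It never attempts to bound $\lambda_2$ of $G(k)$; instead it bounds $\lambda_K(A)$ for \emph{small} sets $A$ by comparing $K$ to $Q=P^{k+2k^2}$ via a Perron--Frobenius comparison (Proposition~\ref{prop: comparison}), obtaining $\lambda_K(A)\le(d-1)^{-k/2(1-o(1))}$. This yields hitting-time bounds for SRW on $G(k)$, which transfer to $\mathbf{Y}$ because $W\asymp K$ entrywise (Lemma~\ref{lem:W}, using Assumption~1), and then to the original SRW because each distance-$k$ crossing takes $(1+o(1))\tfrac{dk}{d-2}$ steps. The conclusion comes from the same hitting-time/mixing relation as in Part (ii). Assumption~1 is used to control $W/K$ and the crossing-time concentration, not for return-probability estimates of the type you describe.
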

\begin{remark}
\label{rem: subseq} If there is no cutoff, then cutoff must fail on some subsequence $(n_{k})$ such that either $\lim_{k \to \infty}d_{n_{k}}=\infty $ or $d_{n_k}=d $ for all $k$ for some fixed $d \ge 3$. Thus there is no loss of generality in assuming that either $\lim_{n \to \infty}d_{n}=\infty $ or $d_{n}=d $ for all $n$. 
\end{remark}
Assumption 1 is rather mild as it is quite difficult to construct a family of asymptotically one-sided Ramanujan graphs violating this assumption. In particular, it is satisfied w.h.p.~by a sequence of random $d$-regular  graphs of increasing sizes \cite{LS}. It follows from \cite[Theorem 1]{kesten} that if $G_n$ is a sequence of $d$-regular transitive asymptotically Ramanujan graphs of increasing sizes then $\lim_{n \to \infty} \mathrm{girth}(G_n) = \infty $, where for a graph $G$, $\mathrm{girth}(G)$ denotes its girth\footnote{The girth of a graph $G$ is the length of the shortest cycle in $G$.} (and so Assumption 1 holds).  

\medskip

The argument of Lubetzky and Peres \cite{ram} does not require Assumption 1 (nor the assumption $\log d_n =o(\log_{d_n}|V_n|) $). They studied the Jordan decomposition of the transition matrix of the non-backtracking walk\footnote{This is a random walk on the directed edges of the graph, with transition matrix $P_{\mathrm{NB}}((x,y)(z,w))=\frac{1_{z=y,w \neq x}}{\deg(y)-1} $.} and used it to derive cutoff for the non-backtracking walk, which for a regular graph implies cutoff also for the SRW. In this note we study the SRW by looking at it only when it crosses distance $k$ from its previous position, for some large $k$.  

\subsection{Organization of this note}
In \S~\ref{s:2}, as a warm up, we present an extremely simple and short proof for the occurrence of cutoff for SRW on  a sequence of asymptotically  Ramanujan graphs of diverging degree. In \S~\ref{s:3} we present some machinery for bounding mixing times using hitting times. We then apply this machinery to prove Part (ii) of Theorem \ref{thm:main}. In \S~\ref{s:4} we give an overview of the proof of Part (i) of Theorem \ref{thm:main}. In \S~\ref{s:5} we prove two auxiliary results. Finally, in \S~\ref{s:6} we conclude the proof of Theorem \ref{thm:main}.  

\section{A warm up}
\label{s:2}
It turns out that  for a sequence of asymptotically  Ramanujan graphs of diverging degree the trivial diameter lower bound (of Lemma \ref{lem: lower1}) is matched by the  trivial spectral-gap upper bound on the $L_2$ mixing time obtained via the Poincar\'e inequality. As a warm up and motivation for what comes we now prove the following theorem.
\begin{theorem}
\label{thm:easy}
Let $G_n=(V_n,E_n)$ be a sequence of non-bipartite, finite, connected, $d_n$-regular asymptotically  Ramanujan graphs with $d_n \to \infty$. Then the corresponding sequence of simple random walks exhibits cutoff around time $\log_{d_{n}}|V_n| $.
\end{theorem}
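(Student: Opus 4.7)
\textbf{Proof plan for Theorem \ref{thm:easy}.}  The plan is to sandwich the mixing time between the trivial diameter lower bound of Lemma \ref{lem: lower1} and the trivial $L^{2}$ spectral upper bound, and to verify that once $d_n \to \infty$ both of these bounds equal $(1+o(1))\log_{d_n}|V_n|$; the cutoff relation \eqref{def:cutoff} then falls out immediately.

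First, for the lower bound, Lemma \ref{lem: lower1} applied to $G_n$ gives, for each fixed $\epsilon\in(0,1/2)$,
\[\mix^{(n)}(1-\epsilon-o(1))\;\ge\;\frac{d_n}{d_n-2}\log_{d_n-1}|V_n|+c_{d_n}\Phi^{-1}(\epsilon)\sqrt{\log_{d_n-1}|V_n|}.\]
Since $d_n \to \infty$ we have $\frac{d_n}{d_n-2}=1+o(1)$, $\log(d_n-1)=(1+o(1))\log d_n$, and $c_{d_n}=O(d_n^{-1/2})\to 0$, so the Gaussian correction term is $o(\log_{d_n}|V_n|)$ and (assuming that $\log_{d_n}|V_n|\to\infty$, without which \eqref{def:cutoff} is vacuous because $\mix^{(n)}(\epsilon)$ stays bounded) the right-hand side equals $(1+o(1))\log_{d_n}|V_n|$.

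Next, for the upper bound, non-bipartiteness ensures that $\la_i(n)>-1$ for $i\ge 2$, so the absolute spectral gap is controlled by $\la_*:=\max\{|\la_i(n)|:|\la_i(n)|\neq 1\}$, which by the asymptotic Ramanujan hypothesis satisfies $\la_*\le \rho_{d_n}^{1-o(1)}$. The standard spectral $L^{2}$ bound for a reversible chain with uniform stationary distribution reads
\[4\max_x\|\Pr_x^{t}-\pi_n\|_{\TV}^{2}\;\le\;\frac{\la_{*}^{2t}}{\min_x\pi_n(x)}\;=\;|V_n|\,\la_{*}^{2t}.\]
A direct computation gives $\log(1/\rho_{d_n}^{2})=\log\bigl(d_n^{2}/(4(d_n-1))\bigr)=(1+o(1))\log d_n$. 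Hence for any fixed $\delta>0$ and $t=\lceil(1+\delta)\log_{d_n}|V_n|\rceil$,
\[|V_n|\,\la_{*}^{2t}\;\le\;\exp\!\Bigl(\log|V_n|-(1+\delta)(1+o(1))\log_{d_n}|V_n|\cdot\log d_n\Bigr)\;\le\;|V_n|^{-\delta/2}\longrightarrow 0,\]
which forces $\mix^{(n)}(\epsilon)\le(1+\delta)\log_{d_n}|V_n|$ for all sufficiently large $n$.

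Combining the two bounds we get $\limsup_n \mix^{(n)}(\epsilon)/\mix^{(n)}(1-\epsilon)\le 1+\delta$ for every $\delta>0$, and letting $\delta\downarrow 0$ yields the cutoff relation \eqref{def:cutoff}. There is no genuinely hard step here; the only care needed is in tracking the $o(1)$'s --- those from the Ramanujan bound $\rho_{d_n}^{1-o(1)}$, from the discrepancy between $\log(d_n-1)$ and $\log d_n$, and from the $-\log 4$ term inside $\log(d_n^{2}/(4(d_n-1)))$ --- all of which are negligible compared with $\log d_n$ precisely because $d_n\to\infty$.
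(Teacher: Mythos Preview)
Your proof is correct and follows exactly the paper's approach: sandwich $\mix^{(n)}$ between the diameter lower bound of Lemma~\ref{lem: lower1} and the Poincar\'e/$L^2$ upper bound \eqref{eq: poin}, then check that both equal $(1+o(1))\log_{d_n}|V_n|$ because $\rho_{d_n}=d_n^{-\frac12(1+o(1))}$ when $d_n\to\infty$. You have simply written out in more detail the asymptotics that the paper compresses into the single line ``$\la=\rho_{d_n}^{1-o(1)}=d_n^{-\frac12(1-o(1))}$, thus $\frac12\log_{1/\la}|V_n|=(1+o(1))\log_{d_n}|V_n|$''.
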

Note that in Part (ii) of Theorem \ref{thm:main} the graphs are assumed to be only asymptotically one-sided Ramanujan. Before proving Theorem \ref{thm:easy} we need a few basic definitions and facts. Let \[\la:=\max \{|a|:a \neq 1, a \text{ is an eigenvalue of }P \} \quad \text{and} \quad \rel:=\frac{1}{1-\la}. \] The $L_2$ distance of $\Pr_x^t $ from $\pi$ is defined as  \[ \| \Pr_x^t-\pi \|_{2,\pi}^2=\sum_y \pi(y) ( P^{t}(x,y)/\pi(y))^2-1.\] By Jensen's and the Poincar\'e inequalities, for all $t$ and $x$  we have that \[4 \| \Pr_x^t-\pi \|_{\TV}^2 \le \| \Pr_x^t-\pi \|_{2,\pi}^2 \le \la^{2t}\| \Pr_x^0-\pi \|_{2,\pi}^2 \le \la^{2t}/\pi(x).\] Hence for SRW on an $n$-vertex regular graph we have for all $t$ and $x$ that 
\begin{equation}
\label{eq: poin}
4 \| \Pr_x^t-\pi \|_{\TV}^2 \le  n \la^{2t} \Longrightarrow \mix(\epsilon) \le \frac{1}{2} \log_{\frac{1}{\la}}(n \epsilon^{-2}). \end{equation}

\emph{Proof of Theorem \ref{thm:easy}:}
 By assumption  $\la  = \rho_{d_{n}}^{1-o(1)} =d_{n}^{-\frac{1}{2}(1-o(1))}$.  Thus $   \frac{1}{2} \log_{\frac{1}{\la}}|V_n| =(1+o(1)) \log_{d_{n}}|V_{n}| $. The proof is concluded by combining \eqref{eq: poin} with Lemma \ref{lem: lower1}. \qed 

\section{Replacing the Poincar\'e inequality by its hitting time analog}
\label{s:3}
In the proof of Theorem \ref{thm:main} we exploit the general connection between mixing times and escape times from small sets, established in \cite{cutoff} (Corollary 3.1 eq.~(3.2)): There exists some absolute constant $C>0$ such that for every reversible chain (with a finite state space), \begin{equation}
\label{eq: hitmix}
\forall \,  \alpha, \epsilon \in (0,1), \quad \mix(\epsilon + \alpha) \le \hit_{1-\alpha}(\epsilon)+C \rel \log (1/\alpha),
\end{equation}
where $\hit_{1-\alpha}(\epsilon):=\inf \{t:\max_{x,A: \pi(A) \le \alpha} \Pr_x[T_{A^{c}}>t] \le \eps \}$ and $T_B:=\inf \{t:X_t \in B \}$ is the hitting time of the set $B$. In the proof of Theorem \ref{thm:main} we replace the naive $L_2$ bound used in the proof of Theorem \ref{thm:easy} by its  hitting time counterpart: Under reversibility, for all $A \subsetneq \Omega$,  $a \in A$ and $t \ge 0$
\begin{equation}
\label{eq: spectralhit}
\pi_A(a)(\Pr_a[T_{A^c}>t])^{2} \le    \sum_{b \in A} \pi_A(b)( \Pr_{b}[T_{A^c}>t] )^{2}=   \|P_A^{t} 1_A \|_{2, A}^2 \le [\la(A)]^{2t},
\end{equation}
where $\pi_A$ is $\pi$ conditioned on $A$, $P_A$ is the restriction of the transition matrix $P$ to $A$ (this is the transition matrix of the chain which is ``killed" upon escaping $A$), $\|f\|_{2,A}^2:=\sum_{b \in A} \pi_A(b)f^{2}(b)$ for $f \in \R^{A}$ and $\la(A)$ is the largest eigenvalue of $P_A$. 

The following proposition relates $\la(A)$ to $\la_2$, the second largest eigenvalue of $P$. 

\begin{proposition}[e.g.~\cite{cutoff} Lemma 3.8]
\label{prop: laAla2}
For every reversible Markov chain and any set $A$,  
\begin{equation}
\label{e:la(A)e}
\la(A) \le \la_2+\pi(A), \end{equation}
\end{proposition}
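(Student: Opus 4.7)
The plan is to establish the inequality via the variational (Rayleigh) characterization of the top eigenvalue of the symmetric operator $P_A$. Since $P$ is reversible, a direct computation shows that the restricted operator $P_A$ is self-adjoint with respect to the (unnormalized) measure $\pi|_A$. Extending any function $f\colon A \to \R$ by zero outside $A$, one obtains
\[
\la(A) \;=\; \sup_{\substack{g\in\R^\Omega \setminus\{0\} \\ \mathrm{supp}(g)\subseteq A}} \frac{\langle Pg,g\rangle_\pi}{\|g\|_\pi^2},
\]
so the problem reduces to controlling the Rayleigh quotient of $P$ over functions supported on $A$.

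Next I would expand $g$ in an $L_2(\pi)$-orthonormal eigenbasis $\varphi_1 \equiv 1, \varphi_2, \ldots$ of $P$ with eigenvalues $1=\la_1 \ge \la_2 \ge \cdots \ge \la_{|\Omega|} \ge -1$. Writing $g=\sum_i c_i \varphi_i$, one has $\|g\|_\pi^2 = \sum_i c_i^2$ and $\langle Pg,g\rangle_\pi = \sum_i \la_i c_i^2$. Since $\la_i \le \la_2$ for every $i\ge 2$, bounding $\sum_{i\ge 2}\la_i c_i^2 \le \la_2(\|g\|_\pi^2 - c_1^2)$ yields
\[
\langle Pg,g\rangle_\pi \;\le\; \la_2 \|g\|_\pi^2 + (1-\la_2)c_1^2.
\]

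The remaining ingredient is an estimate on $c_1^2$ that exploits $\mathrm{supp}(g)\subseteq A$. Since $c_1=\langle g,1\rangle_\pi=\sum_{a\in A}\pi(a)g(a)$, Cauchy--Schwarz gives
\[
c_1^2 \;\le\; \Bigl(\sum_{a\in A}\pi(a)\Bigr)\Bigl(\sum_{a\in A}\pi(a)g(a)^2\Bigr) \;=\; \pi(A)\,\|g\|_\pi^2.
\]
Substituting this into the previous display and dividing by $\|g\|_\pi^2$ yields $\la(A)\le \la_2 + (1-\la_2)\pi(A)$, which implies the claimed bound $\la(A)\le \la_2+\pi(A)$ since $\la_2 \le 1$.

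There is no serious obstacle here; the argument is a standard spectral decomposition combined with Cauchy--Schwarz. The only minor point one has to check is that the restriction to functions supported on $A$ is what permits the Cauchy--Schwarz bound on $c_1^2$ by $\pi(A)\|g\|_\pi^2$, which is the sole source of the correction term $\pi(A)$ in the statement.
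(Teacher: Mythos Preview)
Your argument is correct. The paper does not give its own proof of this proposition; it merely cites an external reference (Lemma~3.8 of \cite{cutoff}). The route you take---the Rayleigh quotient characterization of $\la(A)$ via functions supported on $A$, followed by expansion in the $L_2(\pi)$ eigenbasis of $P$ and the Cauchy--Schwarz bound $c_1^2 \le \pi(A)\|g\|_\pi^2$---is exactly the standard proof of this inequality, and in fact yields the slightly sharper estimate $\la(A)\le \la_2+(1-\la_2)\pi(A)$.
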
 
Similarly to \eqref{eq: poin}, by \eqref{eq: hitmix}-\eqref{e:la(A)e} we have for every reversible chain on a finite state space with $\la_2<1/2$ and every $\alpha \in (0,\la_2]$ that
\begin{equation}
\label{e:la(A)e2}
\begin{split}
& \hit_{1-\alpha}(\sqrt{ \alpha}) \le \frac{1}{2} | \log_{\frac{1}{2 \la_2}}(\min_v \pi(v))|, 
\\ & \mix(2 \sqrt{ \alpha}) \le \frac{1}{2} | \log_{\frac{1}{2 \la_2}}(\min_v \pi(v))|+C \rel \log (1/\alpha).
\end{split}
\end{equation}
We are now in a position to give a short proof for Part (ii) of Theorem \ref{thm:main}.
\begin{proof}
Let $G_n=(V_n,E_n)$ be a sequence of non-bipartite, finite, connected, $d_n$-regular asymptotically one-sided Ramanujan graphs. Assume that $d_n $ diverges and $\log d_n =o(\log_{d_n}|V_n|) $. Let $\alpha=\alpha_n=d_n^{-1/2}=o(1) $. Let $\la_2=\la_2(n)$ be the second largest eigenvalue of the transition matrix of SRW on $G_n$. By our assumptions  $2 \la_2=d_n^{-\frac{1}{2}+o(1)} $
and so by \eqref{e:la(A)e2} we have that \[\mix(2 \sqrt{ \alpha}) \le \frac{1}{2}  \log_{\frac{1}{2 \la_2}}|V_{n}|+C '\log (1/\alpha)=(1+o(1))\log_{d_n}|V_n| . \]
The proof is concluded using Lemma \ref{lem: lower1}.
\end{proof} 
\section{Degree inflation}
\label{s:4}
The simple proof of Part (ii) of Theorem \ref{thm:main} motivates looking at the following graph.

\begin{definition}
\label{d:Gk}
Given a graph $G=(V,E)$, we define $G(k)=(V,E(k))$ via \[E(k):=\{\{u,v \} : \mathrm{dist}_{G}(u,v)=k,\, u,v \in V \},\] where $\mathrm{dist}_{G}(u,v)$ denotes the graph distance of $u$ and $v$ w.r.t.~$G$. Denote the transition matrix of SRW on $G(k)$ by $K$.
\end{definition}
\begin{definition}
\label{d:Y}
 Consider SRW on $G$, $(X_t)_{t=0}^{\infty}$.  Let $T_0:=0$ and inductively set $T_{i+1}:=\inf \{t \ge T_i : \mathrm{dist}_{G}(X_{T_{i+1}},X_{T_{i}})=k \} $. Consider the chain $\mathbf{Y}:=(Y_j)_{j=0}^{\infty}$ defined via $Y_i:=X_{T_i}$ for all $i$, and denote its transition matrix by $W $.
\end{definition}
\begin{remark}
\label{rem:notcon}
It is possible that $G(k):=(V,E(k))$ is not connected. This could be rectified, say by connecting every vertex to its entire $k$-neighborhood. However, below we only use the fact that the SRW on $G(k)$ is reversible w.r.t.~$\pi_{G(k)}(x):=\deg_{G(k)}(x)/(2|E(k)|) $.
\end{remark}
Let $G=(V,E)$ be a $d$-regular finite Ramanujan graph. Assume that Assumption 1 holds. Let $r=r_n$ be as in Assumption 1. Fix some $k=k_n$ such that $1 \ll k \ll \sqrt{ r}$. 
\begin{remark}
\label{rem:www}
Let $K,W$ and $T_i$ be as in Definitions \ref{d:Gk} and \ref{d:Y}.
By Assumption 1, for every $x,y \in V$ of distance $k$ from one another $1 \le K(x,y)d(d-1)^{k-1} \le C_1(d)$. In Lemma \ref{lem:W} we show that for such $x,y$ also $1 \le W(x,y)d(d-1)^{k-1} \le C_2(d)$. In fact, Assumption 1 could have been replaced by the assumption that $\max\{W(x,y),K(x,y) \} \le (d-1)^{-k(1-o(1))} $ and that $T_1$ is concentrated around $\frac{dk}{d-2}$ (uniformly for all initial states).
\end{remark}
\subsection{An overview of the proof of Part (i) of Theorem \ref{thm:main}}
Let $G,k$ and $r$ be as above.
Intuitively,  if either the SRW on $G(k)$ or  the chain $\mathbf{Y}$ (from Definitions \ref{d:Gk} and \ref{d:Y}) exhibit an abrupt convergence to stationarity around time $t=t_n$, then  also the  SRW on $G$ should exhibit an abrupt convergence to stationarity around time $t \cdot \frac{d}{d-2}k $. The term $\frac{d}{d-2}k $ comes from the fact that (by Assumption 1) the expected time it takes the walk on $G$ to get within distance $k  $ from its current position is $\frac{d}{d-2}k(1+o(1))$.

While the chain $\mathbf{Y}$ is more directly related to the SRW on $G$, it is harder to analyze it directly since it need not be reversible and a-priori it is not clear that its stationary distribution is close to the uniform distribution. Instead we analyze the walk on $G(k)$ and use it to learn about $\mathbf{Y}$ and then in turn about the walk on $G$.

\medskip

In light of Part (ii) of Theorem \ref{thm:main} (which has already been proven)  a natural strategy for proving Part (i) of Theorem \ref{thm:main} is to show that  
 $\la_2(K) = \rho_D^{1-o(1)}=(d-1)^{-\frac{k}{2}(1-o(1))}$, where $D$ is the maximal degree in $G(k)$, $K$ is the transition matrix of  SRW on $G(k)$   and $\la_2(K)$ is its second largest eigenvalue. Unfortunately, we do not know how to show this (see the first paragraph of \S~\ref{s:5}). Instead, we obtain such an estimate for $\la_{K}(A)$, the largest eigenvalue of $K_A$, the restriction of $K$ to $A$, for any ``small" set $A$. By small we mean that its stationary probability is at most $\alpha:=(d-1)^{-3k^2} $. 
Indeed,
the key to the proof of Part (i) of Theorem \ref{thm:main} is to show that $\la_{K}(A) \le (d-1)^{-\frac{k}{2}(1-o(1))}$     for every small set $A$. Using \eqref{eq: spectralhit} we get for the walk on $G(k)$ that  $\Pr_a[T_{A^c}>(1+o(1))\frac{1}{k} \log_{d-1}|V| ]=(d-1)^{-\frac{k}{2}(1-o(1))}$. We then show that the same holds for $\mathbf{Y}$ (this is obvious when $2k < \mathrm{girth}(G)$; The general case is derived using the fact that, as mentioned in Remark \ref{rem:www}, $c W(x,y) \le K(x,y) \le CW(x,y) $ for all $x,y$). Finally,  using an obvious coupling between  $\mathbf{Y} $ and the SRW on $G$, after multiplying by $\frac{d}{d-2}k (1+o(1)) $ the last bound is transformed into a bound on  $ \hit_{1-\alpha}(o(1))$ for SRW on $G$ (for some $o(1)$ terms).

\section{Auxiliary results}
 \label{s:5}
    In order to control $\la_{K}(A)$ (for small $A$), apart from Proposition \ref{prop: laAla2} we need the following comparison result. While there are similar comparison techniques for the spectral-gap, we are not aware of a comparison technique which allows one to argue that $\la_2$  (the second largest eigenvalue of the transition matrix) of one chain is close to 0 (say, that $\la_2=o(1)$) if that of another chain is close to 0.
%
%Alternatively, for every $A \subset V_n $ let $E(A,A^c)$ be the collection %of edges connecting $A$ with $A^c$.  We can construct an auxiliary infinite %$d$-regular graph $H_A$ consisting of $A$ and $|E(A,A^c)|$ $(d-1)$-ary trees, %by attaching to the end-point not belonging to $A$ of each edge of $E(A,A^c) %$ the root of one of the $(d-1)$-ary trees (in a bijective manner). Fix %some $a \in A$ and let $b_k$ be the number of non-backtracking cycles of %length $k$ starting at $a$ in $H_A$. Denote $\mathrm{cogr}(H_A):=\sup_{k} %b_{2k}^{1/(2k)} $. The definition does not depend on the choice of $a \in %A $ (e.g.~\cite[Sec.~6.2]{lyons}) and clearly we 
\begin{proposition}
\label{prop: comparison}
Let $P^{(1)}$ and $P^{(2)}$ be two transition matrices on the same finite state space $\Omega$, both reversible w.r.t.~$\pi^{(1)}$ and $\pi^{(2)}$, respectively. Assume that $P^{(1)}(x,y) \le C_{1} P^{(2)}(x,y) $ and $1/C_2 \le \pi^{(1)}(x)/\pi^{(2)}(x) \le C_{2}  $ for all $x,y$. Let $A \subsetneq \Omega $ and let $\la_{P^{(i)}}(A) $ be the largest eigenvalue of $P_A^{(i)}$, the restriction of $P^{(i)}$ to $A$ $(i=1,2)$.  Then
\[\la_{P^{(1)}}(A) \le C_{1} C_2^2 \la_{P^{(2)}}(A). \]
\end{proposition}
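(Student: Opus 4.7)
The plan is to prove the inequality directly from the Rayleigh-quotient characterization of $\la_{P^{(i)}}(A)$. Reversibility of $P^{(i)}$ with respect to $\pi^{(i)}$ means that $P_A^{(i)}$ is self-adjoint on $L^{2}(A,\pi_A^{(i)})$, so the largest eigenvalue of $P_A^{(i)}$ coincides with the maximum of its Rayleigh quotient. Since the $\pi^{(i)}(A)$ normalizations cancel between numerator and denominator, this maximum can be written as
$$\la_{P^{(i)}}(A)=\max_{f:A\to\R,\,f\not\equiv 0}\frac{\sum_{x,y\in A}\pi^{(i)}(x)P^{(i)}(x,y)f(x)f(y)}{\sum_{x\in A}\pi^{(i)}(x)f^{2}(x)}.$$
Moreover, $P_A^{(i)}$ is a nonnegative matrix, so Perron--Frobenius (or the elementary observation that replacing $f$ by $|f|$ can only increase the numerator while preserving the denominator) ensures that the maximum is attained at some $f\ge 0$.

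Next I would take a nonnegative maximizer $f$ for the Rayleigh quotient of $P^{(1)}$ and evaluate both Rayleigh quotients at the same $f$. The assumption $P^{(1)}(x,y)\le C_{1}P^{(2)}(x,y)$ together with the upper bound $\pi^{(1)}(x)\le C_{2}\pi^{(2)}(x)$ implies that the numerator for $P^{(1)}$ at $f$ is at most $C_{1}C_{2}$ times the numerator for $P^{(2)}$ at $f$, while the lower bound $\pi^{(1)}(x)\ge\pi^{(2)}(x)/C_{2}$ implies that the denominator for $P^{(1)}$ at $f$ is at least $1/C_{2}$ times the denominator for $P^{(2)}$ at $f$. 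Dividing these two inequalities gives
$$\la_{P^{(1)}}(A)\;\le\; C_{1}C_{2}^{2}\cdot\frac{\sum_{x,y\in A}\pi^{(2)}(x)P^{(2)}(x,y)f(x)f(y)}{\sum_{x\in A}\pi^{(2)}(x)f^{2}(x)}\;\le\; C_{1}C_{2}^{2}\,\la_{P^{(2)}}(A),$$
which is the claimed bound.

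There is essentially no hard step here: the argument is just a careful use of the variational principle, and the only points to be explicit about are (a) why reversibility is what allows its use, namely because it makes $P_A^{(i)}$ self-adjoint in the $\pi_A^{(i)}$-weighted inner product, and (b) why nonnegativity of entries allows restricting to $f\ge 0$, which in turn is what makes the monotonicity $P^{(1)}(x,y)\le C_{1}P^{(2)}(x,y)$ translate into the corresponding inequality between the two numerators. No further machinery is required.
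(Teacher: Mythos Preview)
Your proposal is correct and follows essentially the same route as the paper: both use the Perron--Frobenius/Rayleigh-quotient characterization of $\la_{P^{(i)}}(A)$ over nonnegative test functions, then compare numerators and denominators using the two assumed inequalities to pick up the factor $C_{1}C_{2}^{2}$. The paper's proof is a one-line version of exactly this argument.
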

\emph{Proof:}
Denote $\langle f,g\rangle_{\pi_{A}^{(i)}}:=\sum_{x\in A} \pi_{A}^{(i)}(x)g(x)f(x) $. By the Perron-Frobenius Theorem \[\la_{P^{(1)}}(A)=\max_{f \in \R_+^A,f \neq 0} \frac{ \langle P_{A}^{(1)}f,f\rangle_{\pi_{A}^{(1)}}}{\langle f,f\rangle_{\pi_{A}^{(1)}}} \le C_1C_{2}^2 \max_{f \in \R_+^A,f \neq 0} \frac{ \langle P_{A}^{(2)}f,f\rangle_{\pi_{A}^{(2)}}}{\langle f,f\rangle_{\pi_{A}^{(2)}}} =C_1C_{2}^2 \la_{P^{(2)}}(A). \qed \]
Before proving Theorem \ref{thm:main} we need one more lemma.
\begin{lemma}
\label{lem:W}
Let $G=(V,E)$ be a  $d$-regular graph ($d \ge 3$). Let $v \in V$. For  $i,k \in \N $   let $D_i:=\{u \in V : \mathrm{dist}_{G}(u,v)=i \}$,  $B_{i}:=\cup_{j=0}^{i} D_j$ (the ball of radius $i$ around $v$) and 
\[\mathrm{t}(B_k):=|\{\{x,y\} \in E:y \in B_{k-1},x \in B_k \}|-|B_{k}|. \]
 For any $s \ge 0 $ there exist some  constant $C(s,d)>0$ and $k_s$ such that if $k \ge k_s$, $\mathrm{t}(B_k) \le s $ and $D_k \neq \eset $ then \begin{equation}
\label{e:hitDk}
\frac{1}{d(d-1)^{k-1} } \le  \min_{u \in D_k }\Pr_v[T_{D_{k}}=T_u] \le \max_{u \in D_k }\Pr_v[T_{D_{k}}=T_u] \le \frac{C(s,d)}{d(d-1)^{k-1} }.
\end{equation}  
\end{lemma}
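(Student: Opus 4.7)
My plan is to compare with the $d$-regular tree $\mathbb{T}_d$, the universal cover of $G$. I would fix a lift $\tilde v$ of $v$ under the covering projection $\pi:\mathbb{T}_d\to G$, let $\tilde D_k$ denote the sphere of radius $k$ around $\tilde v$ in $\mathbb{T}_d$, and lift the SRW $(X_t)$ from $v$ to a SRW $(\tilde X_t)$ from $\tilde v$ so that $X_t=\pi(\tilde X_t)$. By the symmetry of $\mathbb{T}_d$ the tree hitting distribution is exactly uniform: $\Pr_{\tilde v}[T_{\tilde D_k}=T_{\tilde w}]=1/(d(d-1)^{k-1})$ for every $\tilde w\in\tilde D_k$. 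Since $\pi$ is distance-nonincreasing, writing $\tau:=T_{\tilde D_k}$ one has $\tau\le T_{D_k}$, and on the alignment event $\mathcal{A}:=\{\pi(\tilde X_\tau)\in D_k\}$ the two hitting times coincide with $X_{T_{D_k}}=\pi(\tilde X_\tau)$.

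For the lower bound, each length-$k$ geodesic from $v$ to $u$ in $G$ lifts to an element of $\pi^{-1}(u)\cap\tilde D_k$, so this set is nonempty whenever $u\in D_k$. The disjoint tree events $\{\tilde X_\tau=\tilde u\}$ for $\tilde u\in\pi^{-1}(u)\cap\tilde D_k$ each lie in $\mathcal{A}$ and force $X_{T_{D_k}}=u$, yielding
\[
\Pr_v[T_{D_k}=T_u]\;\ge\;\frac{|\pi^{-1}(u)\cap\tilde D_k|}{d(d-1)^{k-1}}\;\ge\;\frac{1}{d(d-1)^{k-1}}.
\]

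For the upper bound I would decompose
\[
\Pr_v[T_{D_k}=T_u]\;=\;\frac{|\pi^{-1}(u)\cap\tilde D_k|}{d(d-1)^{k-1}}\;+\;\Pr_v[T_{D_k}=T_u,\,\mathcal{A}^c].
\]
The hypothesis $\mathrm{t}(B_k)\le s$ rearranges via degree counting as $(d-1)|B_{k-1}|-e(B_{k-1})-|D_k|\le s$, so the subgraph of $G[B_k]$ with the edges internal to $D_k$ deleted has cycle-space dimension at most $s+1$; equivalently, it is a spanning tree plus at most $s+1$ chord edges. This bounds $|\pi^{-1}(u)\cap\tilde D_k|$, the number of length-$k$ geodesics from $v$ to $u$, by a constant $C_1(s)$ depending only on $s$ (each geodesic is essentially determined by its pattern of chord usage). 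For the $\mathcal{A}^c$ term I would invoke the strong Markov property at $\tau$: conditionally on $\pi(\tilde X_\tau)=y\in B_{k-1}$, the walk restarts at $y$ and must still first-hit $D_k$ at $u$. Iterating the same lift-based comparison from $y$, and using that for $k\ge k_s$ the cycle-space control also governs the balls around shortcut endpoints in $B_k$, should yield a geometric-series bound summing to $C(s,d)/d(d-1)^{k-1}$.

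The hard part will be the $\mathcal{A}^c$ term. The total shortcut probability $\Pr_v[\mathcal{A}^c]$ can stay bounded away from $0$ uniformly in $k$, since the lifted non-backtracking walk may wrap a short cycle in $B_k$ many times, so the naive bound $\Pr_v[T_{D_k}=T_u,\,\mathcal{A}^c]\le\Pr_v[\mathcal{A}^c]$ is hopelessly crude. What makes the argument work is that these shortcut endpoints spread across the $|D_k|\asymp d(d-1)^{k-1}$ vertices of $D_k$ roughly uniformly, so each individual $u$ collects only an $O(1/(d(d-1)^{k-1}))$ share. Quantifying this uniform spread --- essentially by recursively lifting from each shortcut endpoint with a summable geometric loss per iteration --- is the substantive technical step and the reason the statement requires the threshold $k\ge k_s$.
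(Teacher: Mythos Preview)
Your lower bound via the covering tree is exactly the paper's argument.

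For the upper bound, the paper takes a different and cleaner route: induction on $s$. The base case (tree) is immediate. For the step, one picks any cycle in $B_k$, lets $x$ be the vertex of that cycle maximizing $\Pr_x[T_{D_k}=T_z]$ (where $z\in D_k$ is the maximizer we care about), chooses one of the two cycle edges $e=\{x,y\}$ at $x$ --- the one less likely to carry the first arrival to $x$ --- and deletes it, hanging $d$-ary trees at $x,y$ to preserve $d$-regularity. Splitting according to whether $T_{D_k}$ occurs before or after $T_{\{x,y\}}$ and exploiting the two maximality choices gives $\Pr_v[T_{D_k}=T_z]\le K(s,d)\,\mathrm{P}^{(e)}_v[T_{D_k^{(e)}}=T_z]$; the modified graph has $\mathrm{t}$-parameter at most $s-1$, and the induction closes. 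The point is that one recurses on the \emph{graph} (peeling off one extra edge at a time), never on the walk.

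Your covering-tree decomposition, by contrast, leaves a genuine gap at exactly the place you flag. On $\mathcal{A}^c$ the walk restarts from some $y\in B_{k-1}$ and must still first-hit $D_k=D_k(v)$, which is \emph{not} a sphere centred at $y$; so ``iterating the same lift-based comparison from $y$'' is not the same problem, and the lemma does not feed back into itself. Your fallback heuristic --- that the hitting distribution from each shortcut endpoint spreads roughly uniformly over $D_k$ --- is precisely the upper bound in \eqref{e:hitDk} with $v$ replaced by $y$, so invoking it is circular. And since you correctly observe that $\Pr_v[\mathcal{A}^c]$ need not be small, no iterate of the bare event gives a summable series. I do not see how to close this line without importing something equivalent to the paper's edge-deletion; that device is what replaces your unproven ``uniform spread'' by a controlled comparison to a graph with strictly fewer cycles.

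Your cycle-space computation (Betti number $\le s+1$, hence $|\pi^{-1}(u)\cap\tilde D_k|\le C_1(s)$) is correct and would handle the $\mathcal{A}$-term, but the paper's scheme never needs it.
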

\begin{proof}
 $ $
Let $u \in D_k$. We first prove that $  \Pr_v[T_{D_{k}}=T_u] \ge \frac{1}{d(d-1)^{k-1} } $. This follows from a standard argument involving the covering tree of $G$.  
A non-backtracking path of length $\ell$ is a sequence of vertices $(v_0,v_1,\ldots,v_{\ell})$ such that $\{v_i,v_{i-1} \} \in E $ and $v_{i+2}  \neq v_i $ for all $i$. Let $\mathcal{P}_{\ell} $ be the collection of all non-backing paths of length $\ell$ starting from $v$. Let $\mathbb{T}_d $ be the (infinite) $d$-regular tree. We may label the $\ell$th level of $\mathbb{T}_d $ by the set $\mathcal{P}_{\ell}$ (in a bijective manner) such that the children of  $(v,v_1,\ldots,v_{\ell})$ are $\{(v,v_1,\ldots,v_{\ell},v'):(v,v_1,\ldots,v_{\ell},v') \in \mathcal{P}_{\ell+1} \}$. For  $\gamma= (v,v_1,\ldots,v_{\ell}) $ let $\phi(\gamma):=v_{\ell}$. Note that if $(S_n)_{n=0}^{\infty}$ is a SRW on  $\mathbb{T}_d $ (labeled as above) started from $(v)$ (which is the root) then $(\phi (S_n))_{n=0}^{\infty} $ is a SRW on $G $ started from $v$. Denote the law of $(S_n)_{n=0}^{\infty}$ by $\mathbb{P}_v$.   

 Fix some $\gamma:= (v,v_1,\ldots,v_{k}) \in \mathcal{P}_k $ such that $v_k=u$. Finally, observe that \[\Pr_v[T_{D_{k}}=T_u] \ge \mathbb{P}_{v}[T_{\mathcal{P}_{k}}= T_{\gamma}  ]=\frac{1}{|\mathcal{P}_k|}=\frac{1}{d(d-1)^{k-1} }.\]

We now prove that $\Pr_v[T_{D_{k}}=T_u] \le \frac{C(s,d)}{d(d-1)^{k-1} }$. We prove this  by induction on $s $. The base case $\mathrm{t}(B_{k})=0 $ is trivial (it holds with $C(1,d)=1$). Now consider the case that $\mathrm{t}(B_{k})=s>0 $. Let $z \in D_k $ be such that $\Pr_v[T_{D_{k}}=T_z] =\max_{u \in D_k }\Pr_v[T_{D_{k}}=T_u] $. For an edge $e:=\{x,y\} \in E $  let $G_e:=(V,E \setminus \{e\} )$ be the graph obtained by deleting $e$ from $G$. Let $H_e:=(V_{e},E_{e})$ be the graph obtained from $G_e$ by connecting $x$ (resp.~$y$)  to the root of a  $d$-ary tree\footnote{The root of a $d$-ary tree is of degree $d-1$.} $\mathcal{T}_x$ (resp.~$\mathcal{T}_y$). Denote the law of SRW on $H_e$ by $\mathrm{P}^{(e)}$. Let $D_i^{(e)}:=\{u \in V_{e} : \mathrm{dist}_{H_{e}}(u,v)=i \}$ and $B_{k}^{(e)}:=\cup_{i=0}^{k} D_i^{(e)}$.   We now show that there is some constant $K(s,d)$ and an edge  $e=\{x,y\}\in E$ belonging to some cycle in $B_k$ such that $x \in B_k,y \in B_{k-1} $ and
\begin{equation}
\label{e:Ksd}
\Pr_v[T_{D_{k}}=T_z] \le K(s,d)\mathrm{P}_v^{(e)}[T_{D_{k}^{(e)}}=T_z].
\end{equation}   
Once this is established, invoking the induction hypothesis concludes the induction step. 

Consider an arbitrary cycle in $B_k$ with at most one vertex in $D_k$. Let $x$ be the vertex of the cycle which maximizes $\Pr_x[T_{D_{k}}=T_z]$. Let $e=\{x,y\},e'=\{x,y'\}$ be the two edges of the cycle which are incident to $x$. Without loss of generality, let $e$ be the one through which $x$  is less likely to be reached. More precisely, assume that
 \begin{equation}
 \label{e:xy}
 \Pr_v[X_{T_{x}-1}=y,T_x \le T_{D_k}] \le \Pr_v[X_{T_{x}-1}=y',T_x \le T_{D_k}].\end{equation}
Also, by the choice of $x$ we have that
 \begin{equation}
 \label{e:xy2}
 \Pr_x[T_{D_k}=T_{z}] \ge \Pr_y[T_{D_k}=T_{z}].\end{equation}
Note that if $x \in D_k$ and $x \neq z $ then $\Pr_v[T_{D_{k}}=T_z] =\mathrm{P}_v^{(e)}[T_{D_{k}^{(e)}}=T_z] $. If $x=z$ then by \eqref{e:xy} $\Pr_v[T_{D_{k}}=T_z] \le 2 \mathrm{P}_v^{(e)}[T_{D_{k}^{(e)}}=T_z] $. Now consider the case that $x \notin D_k$. Denote $T_{x,y}:=\min \{T_x,T_y\} $ and $T_{x}^+:=\inf\{t>0:X_t=x \}$.  Observe that \begin{equation}
\label{e:Pe1}
\Pr_v[T_{D_{k}}=T_z<T_{x,y} ] = \mathrm{P}_v^{(e)}[T_{D_{k}^{(e)}}=T_z<T_{x,y} ]. \end{equation} Thus in order to conclude the proof of \eqref{e:Ksd} it remains only to show that \[\Pr_v[T_{D_{k}}=T_z>T_{x,y} ]  \le \tilde C(s,d) \mathrm{P}_v^{(e)}[T_{D_{k}^{(e)}}=T_z>T_{x,y} ]. \] By  \eqref{e:xy} we have that \[ \Pr_v[T_x < \min \{ T_{D_k},T_y\} ] \ge \Pr_v[T_{y}<T_x <  T_{D_k} ] \ge \frac{1}{d}\Pr_v[T_y <  T_{D_k} ].    \]
Thus $\Pr_v[T_x <  T_{D_k} ] \ge \frac{2}{ d} \Pr_v[T_y <  T_{D_k} ] $.
By  \eqref{e:xy2} we get that \[\Pr_v[T_x <  T_{D_k} =T_{z}]=\Pr_v[T_x <  T_{D_k} ]\Pr_x[  T_{D_k} =T_{z}] \ge \frac{2}{ d} \Pr_v[T_y <  T_{D_k} ]\Pr_y[  T_{D_k} =T_{z}]   \]
$=\frac{2}{d}\Pr_v[T_y <  T_{D_k} =T_{z}]$. Hence, there exists some constant $M(s,d)$ such that
 \begin{equation}
 \label{e:Pe2}
 \begin{split}
& \Pr_v[T_{D_{k}}=T_z>T_{x,y}] \le \Pr_v[T_{D_{k}}=T_z>T_{x}]+\Pr_v[T_{D_{k}}=T_z>T_{y}]   \\ & \le (1+\frac{d}{2})\Pr_v[T_{D_{k}}=T_z>T_{x}] \le (d+2)  \Pr_v[T_{D_{k}}=T_z, T_x < \min \{ T_{D_k},T_y\}]   
\\ &   \le M(s,d)  \Pr_v[T_x < \min \{ T_{D_k},T_y\}]\Pr_x[T_{D_k}=T_{z},\min \{T_x^{+},T_y\} >  T_{D_k}] 
\\ & \le M(s,d)\mathrm{P}_v^{(e)}[T_x < \min \{ T_{D_k^{(e)}},T_y\}]\mathrm{P}_x^{(e)}[T_{D_k^{(e)}}=T_{z}<  T_x^{+}] \\ & \le M(s,d) \mathrm{P}_v^{(e)}[T_{D_{k}^{(e)}}=T_z>T_{x,y} ],  \end{split} \end{equation}
 where in the second inequality we have used the fact that $\Pr_x[\min \{T_x^{+},T_y\} >  T_{D_k}] \ge c(s,d)$ for some constant $c(s,d)>0$\footnote{This could be proved by induction on $s$.}  and that by the choice of $x$ (namely, by \eqref{e:xy2}) we have that $\Pr_y[T_{D_k}=T_{z} \mid T_x>  T_{D_k}  ] \le \Pr_x[T_{D_k}=T_{z} ]=\Pr_x[T_{D_k}=T_{z} \mid T_x^{+}>  T_{D_k}  ]   $ and so \[\Pr_x[T_{D_k}=T_{z} \mid \min \{T_x^{+},T_y\} >  T_{D_k}  ] \ge  \Pr_x[T_{D_k}=T_{z} \mid T_x^{+}>  T_{D_k}  ]  = \Pr_x[T_{D_k}=T_{z} ].\]
 We leave the missing details as an exercise. Finally, combining \eqref{e:Pe1} and \eqref{e:Pe2} yields \eqref{e:Ksd}.        
\end{proof} 
\section{Proof of Theorem \ref{thm:main}}
\label{s:6}
Part (ii) was proven in \S~\ref{s:3}. Let $G_n=(V_n,E_n)$ be a sequence of non-bipartite, finite, connected, $d$-regular asymptotically one-sided Ramanujan graphs satisfying Assumption 1. Let  $r_n \to \infty$ be as in Assumption 1. Pick some $k=k_n \to \infty $ such that $k_n^2=o(r_n)$.  From this point on we often suppress the dependence on $n$ from our notation. Denote the transition matrix of SRW on $G$ (resp.~$G(k)$) by $P$ (resp.~$K$) and its stationary distribution by $\pi$ (resp.~$\pi_{G(k)}$). Let $A$ be an arbitrary set such that $\pi(A) \le \alpha=\alpha_n:=d^{-3k^2} $.    Denote $Q:=P^{k+2k^2} $. 

Before proceeding with the proof, we explain the choice of $k+2k^2$ in the definition of $Q$. In order to obtain an upper bound on $\la_{K}(A)$ we shall apply Proposition \ref{prop: comparison} with $P^t$ (for some $t$) and $K$ in the roles of $P^{(2)}$ and $P^{(1)}$ (respectively) from Proposition \ref{prop: comparison}. The obtained estimate is useful only when $t \ge c k^2 $. Heuristically, this is related to the fact that a SRW on a $d$-regular tree is much more likely to be at time $t$ at some given vertex of distance $O(\sqrt{t})$ from its starting point, than at some other given vertex at distance $\gg \sqrt{t}$ from its starting point (and we want $k = O(\sqrt{t})$).

\medskip

Recall that $\rho_d:=\frac{2\sqrt{d-1}}{d}$. Let $\la_{2}$ and $\la_2'$ be the second largest eigenvalues of $P$ and $Q$, respectively. Since $\la_2= \rho_d^{1-o(1)}$, by decreasing $k$ if necessary, we may assume that $ \la_2 \le \rho_{d}^{1-\frac{1}{3k^2 \log d}}$. By Proposition \ref{prop: laAla2} (using the notation from there) and our choice of $\alpha$,   \begin{equation}
\label{eq: laAla2}
 \la_Q(A) \le \la_2' + \alpha =\la_2^{k+2k^{2}}+ \alpha \le C_1 \rho_{d}^{k+2k^{2}}.\end{equation}  

 Let $(S_t)_{t=0}^{\infty}$ be SRW on $\mathbb{T}_d$,  the infinite $d$-regular tree rooted at $o$. 
 Denote its transition kernel by  $P_{\mathbb{T}_d}$. Denote the $i$th level of $\mathbb{T}_d$ by $\mathcal{L}_i $. Let $\tilde S_t$ be the level $S_t$ belongs to. Let $v \in  \mathcal{L}_k $. Let $T_{0}^+:=\inf\{t>0:\tilde S_t=0\}$. Then by Lemma \ref{lem:Z} (second inequality)
\begin{equation}
\label{e:Td1}
\begin{split}
& |\mathcal{L}_k|P_{\mathbb{T}_d}^{k+2k^2}(o,v)=\Pr_{0}[\tilde S_{k+2k^2} =k  ] \ge \Pr_{0}[\tilde S_{k+2k^2} =k,T_{0}^+>k+2k^2  ]  \\ & \ge c_{0} k^{-2} 2^{k+2k^{2}}(d-1)^{k^2+k-1}d^{-(k+2k^2)+1} \ge c_{1} k^{-2} (d-1)^{\frac{k}{2}}\rho_d^{2k^2+k}  
\end{split}
\end{equation}
 Let $x,y$ be a pair of adjacent vertices in $G(k)$.   It is standard that $P^{t}(x,y) \ge P_{\mathbb{T}_d}^{t}(o,v)$ for all $t$ (where $v$ is as above), and so by \eqref{e:Td1}
\begin{equation}
\label{e:Qxy}
Q(x,y)=P^{k+2k^2}(x,y) \ge P_{\mathbb{T}_d}^{k+2k^2}(o,v) \ge (d-1)^{\frac{k}{2}(1-o(1))}\rho_d^{2k^2+k}=:C_k. \end{equation}
 By Proposition \ref{prop: comparison}  (and borrowing the notation from there) in conjunction with \eqref{eq: laAla2}, \eqref{e:Qxy} and Assumption 1 (which implies that there exists some constant $C_{0}=C_0(d)>0$ such that  $L:=\frac{\max_x \deg_{G(k)}(x)}{\min_y \deg_{G(k)}(y)} \le C_0 $ and that if $x,y$ are of distance $k$ in $G$ then $K(x,y) \le C_{0}(d-1)^{-k}$), we have that \[\la_K(A) \le \la_Q(A)C_0^{3}(d-1)^{-k}/C_k=(d-1)^{-\frac{k}{2} (1-o(1))}.    \]
Denote the probability w.r.t.~SRW on $G(k)$ by $\mathbb{P}$. By \eqref{eq: spectralhit} we have for all $t$ (uniformly) that 
\begin{equation}
\label{eq: hitG(k)}
\max_{(a,A):a \in A, \pi(A) \le \alpha}\mathbb{P}_a[T_{A^c}>t] \le \sqrt{ C_0\alpha |V|} (d-1)^{-\frac{tk}{2}(1-o(1))}=\sqrt{ \alpha |V|} (d-1)^{-\frac{tk}{2}(1-o(1))} , \end{equation}
where we have used the fact that  $\max_{x \in V } \pi_{G(k)}(x)/\pi(x) \le C_0$, where $C_0$ is as above. 

Consider SRW on $G$, $(X_t)_{t=0}^{\infty}$.  Let $T_0:=0$ and inductively, $T_{i+1}:=\inf \{t \ge T_i : \mathrm{dist}_{G}(X_{T_{i+1}},X_{T_{i}})=k \} $. As in Definition \ref{d:Y}, consider the chain $\mathbf{Y}=(Y_{i})_{i=0}^{\infty}$, where $Y_{i}:=X_{T_{i}} $   for all $i$.  Let $W$ be its transition matrix. By Assumption 1 and  Lemma \ref{lem:W}  there exists some  constant $C=C(d)$ such that for all $x,y \in V $ of distance $k$ from one another (in $G$), 
\begin{equation}
\label{e:W2} 
1/C \le W(x,y)/K(x,y) \le C.
\end{equation} Denote the probability w.r.t.~$\mathbf{Y}$ by $\mathbf{P}$. Then by \eqref{eq: hitG(k)} and \eqref{e:W2} 
\begin{equation}
\label{eq: hitY}
\max_{(a,A):a \in A, \pi(A) \le \alpha}\mathbf{P}_a[T_{A^c}>t]  \le C^{t} \max_{(a,A):a \in A, \pi(A) \le \alpha}\mathbb{P}_a[T_{A^c}>t] \le \sqrt{ \alpha |V|} (d-1)^{-\frac{tk}{2}(1-o(1))} , \end{equation}
uniformly for all $t$. Denote the distribution of SRW on $G$ by $\Pr$. 
Observe that for all $s,t \ge 0$  \[\max_{(a,A):a \in A, \pi(A) \le \alpha}\mathrm{P}_a[T_{A^c}>  t+s] \le \max_{(a,A):a \in A, \pi(A) \le \alpha}\mathbf{P}_a[T_{A^c}>\tau(t)]+\max_{a \in V} \mathrm{P}_a[T_{\tau(t)}> t+s],  \]
where \[\tau(t):= \lceil \frac{(d-2)t}{dk} \rceil.\]
To conclude the proof (using  \eqref{eq: hitmix} in conjunction with Lemma \ref{lem: lower1}), we now show that (for some $o(1)$ terms) substituting above $t=\lceil (1+o(1)) \frac{d}{d-2}\log_{d-1}|V|  \rceil $ and $s=t/\sqrt{k} +t^{2/3}$ (the value $2/3$ in the exponent can be replaced by any number in $(1/2,1)$) yields $\max_{(a,A):a \in A, \pi(A) \le \alpha}\mathrm{P}_a[T_{A^c}>  t+s]=o(1)$. By \eqref{eq: hitY} it suffices to show that for this choice of $s$ and $t$ we have that $\max_{a \in V} \mathrm{P}_a[T_{\tau(t)}> t+s]=o(1)$. 

Fix $s$ and $t$ as above. We say that time $j  $ is \emph{good} if $X_j$ has  $d-1$ neighbors of greater distance from $X_{T_{i(j)}}$, where $i(j)$ is the index for which $j \in [T_{i(j)},T_{i(j)+1})$.
Let \[U_i:=|\{t \in [T_i,T_{i+1}):t \text{ is not good} \}| \quad \text{and} \quad U:=\sum_{i=0}^{\tau(t)} U_{i}.\]
By Assumption 1 we have that $\max_{v}\Pr_v[U_0> \ell ] \le C' e^{-c \ell} $ for all $\ell$, for some constants $c,C'>0$ (this is left as an exercise). By the Markov property, it follows that \[\max_{v}\Pr_v[U> \frac{t}{\sqrt{k}}]=o(1).\]  

Consider a coupling of the SRW on $G$ $(X_j)_{j=0}^{\infty}$ with the SRW on $\mathbb{T}_d$ started from its root $o$ $(S_j)_{j=0}^{\infty}$ in which if $j$ is the $\ell$th good time, then $\mathrm{dist}_G(X_{j+1},X_{T_{i(j)}})< \mathrm{dist}_G(X_{j},X_{T_{i(j)}}) $ iff  $\mathrm{dist}_{\mathbb{T}_d}(S_{\ell+1},o)< \mathrm{dist}_{\mathbb{T}_d}(S_{\ell},o) $ (unless $S_{\ell}=o$, but there is no harm in neglecting this possibility, as the number of returns to $o$ has a Geometric distribution). Using this coupling we get that for all $a \in V$  we have that
\[\mathrm{P}_a[T_{\tau(t)}> t+s] \le \Pr_{a}[U >\frac{t}{\sqrt{k}} ]+\max_{0 \le j \le \lceil \frac{t}{\sqrt{k}}\rceil} \Pr_{o}[S_{ t+s-j} \in \cup_{i=0}^{\tau(t)+j} \mathcal{L}_i  ]=o(1). \]
To see that $\max_{0 \le j \le \lceil \frac{t}{\sqrt{k}}\rceil} \Pr_{o}[S_{ t+s-j} \in \cup_{i=0}^{\tau(t)+j} \mathcal{L}_i  ]=o(1)$ use the fact that the distance of $S_{t+s-j} $ from $o$ is concentrated around $\frac{d-2}{d}(t+s-j) $ within a window whose length is of order $\sqrt{t} $    (c.f.~\cite{ram} (2.2)-(2.3) pg.~9) and that by our choice of $s$ we have that $\frac{d-2}{d}(t+s-j)-(\tau(t)+j) \gg \sqrt{t} $, for all $0 \le j \le  \lceil \frac{t}{\sqrt{k}}\rceil$. \qed

\begin{lemma}
\label{lem:Z}
Let $M$ be the number of paths of length $k+2k^2$ in $\mathbb{Z}$, starting from $0$, which end at $k$ and do not return to $0$. Then $M \ge c_0 2^{k+2k^2}/k^2$. 
\end{lemma}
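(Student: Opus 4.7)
My plan is to reduce this to the classical ballot problem. Any $\pm 1$ path in $\mathbb Z$ of length $n := k+2k^2$ that goes from $0$ to $k$ must use exactly $u := k^2+k$ up-steps and $d := k^2$ down-steps. The requirement ``does not return to $0$'' is (since the path must take its first step to $+1$) equivalent to the path being strictly positive at each time $t = 1, 2, \ldots, n$. The ballot theorem (equivalently, the cycle lemma of Dvoretzky--Motzkin) then gives the exact count
\[
M \;=\; \frac{k}{n}\binom{n}{k^2+k}.
\]

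The second step is a routine lower bound on the binomial coefficient via Stirling. Since $u - n/2 = k/2$ and $n = 2k^2 + k$, the normalized deviation satisfies $k^2/(2n) \le 1/4$, so Stirling (or equivalently the Gaussian local CLT for a fair random walk) yields an absolute constant $c_1 > 0$ such that
\[
\binom{n}{k^2+k} \;\ge\; c_1 \cdot \frac{2^n}{\sqrt{n}}
\]
uniformly in $k$. (The exponential factor $\exp(-k^2/(2n))$ coming from the Gaussian is bounded below by $e^{-1/4}$.) Substituting, and using $n^{3/2} \le (3k^2)^{3/2} = 3^{3/2}\, k^3$ for $k \ge 1$,
\[
M \;\ge\; \frac{k}{n}\cdot\frac{c_1 \cdot 2^n}{\sqrt{n}} \;=\; c_1 \cdot \frac{k}{n^{3/2}}\cdot 2^n \;\ge\; c_0 \cdot \frac{2^{k+2k^2}}{k^2},
\]
as claimed.

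There is no real obstacle here: both the ballot formula and the Stirling bound are classical. The only mild point worth checking is that the constant $c_1$ in the Stirling estimate is uniform in $k$, which holds precisely because $k = \Theta(\sqrt{n})$ so that the displacement $k/2$ of the binomial from its center lies well within the Gaussian ``bulk'' of width $\Theta(\sqrt{n})$.
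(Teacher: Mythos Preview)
Your argument is correct, and it takes a genuinely different route from the paper's. The paper proves the bound probabilistically: it lower-bounds $\Pr_0[Z_{k+2k^2}=k,\,T_0^+>k+2k^2]$ by first forcing the walk to hit $k$ before returning to $0$ (probability $\ge c_1/k$ via gambler's ruin), and then requiring it to sit back at $k$ at the terminal time without having touched $0$, which costs another factor of order $1/k$ via a confinement estimate in the interval $(0,2k)$. Multiplying by $2^{k+2k^2}$ gives the path count. Your approach instead produces the \emph{exact} count $M=\frac{k}{n}\binom{n}{k^2+k}$ via the ballot theorem, and then extracts the $1/k^2$ from the combination of the $k/n$ prefactor and Stirling on the near-central binomial coefficient. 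Your route is shorter and sharper (it even shows $M=\Theta(2^{k+2k^2}/k^2)$), while the paper's decomposition is more in the random-walk spirit of the surrounding arguments and avoids invoking Stirling. Both are entirely standard; either suffices here.
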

\begin{proof}
Let $(Z_i)_{i=0}^{\infty}$ be a SRW on $\Z$. Let $T_0^+:=\inf \{t>0: Z_t=0 \}$. Then
\[\Pr_0[Z_{k+2k^2}=k,T_{0}^+>k+2k^2 ] \ge \Pr_0[ T_{0}^+> k+2k^2  \ge T_k ] \min_{0 \le i \le k^2} \Pr_k[T_0>2i,Z_{2i}=k] \ge c_0k^{-2},  \]
where we have used the fact that  $\Pr_0[ T_{0}^+> k+2k^2 \ge T_{k} ] \ge c_1 \Pr_0[ T_{0}^+>T_{k} ]=c_{1}/(2k)$ and that $\Pr_k[T_0>2i,Z_{2i}=k] \ge \Pr_k[T_{\{0,2k\}}>2i]  \Pr_k[Z_{2i}=k \mid T_{\{0,2k\}}>2i  ] \ge c_2 \cdot \frac{1}{2k}  $ for all
$i \le k^2 $.
\end{proof}
\section*{Acknowledgements}
The author is grateful to Nathana\"el Berestycki, Gady Kozma, Eyal Lubetzky, Yuval Peres, Justin Salez, Allan Sly and Perla Sousi  for useful discussions.

\nocite{}
\bibliographystyle{plain}
\bibliography{Ramanujan}

\vspace{2mm}

\end{document}